\newtheorem[L,rightmargin=6pt,thickness=1.5pt]{thmchapter}{Theorem}[section]
\def\N{\mathbb{N}} 
\def\R{\mathbb{R}} 
\def\ell{\textit{l}} 
\def\wp1{\mathrm{w.p.} 1}  
\newtheorem{theoreme}{Theorem}[section]
\newtheorem*{lemme}{Useful lemmas.}
\newtheorem{proposition}[theoreme]{Proposition}
\newtheorem{definition}[theoreme]{Definition\rm}
\newtheorem*{remarque}{\it Remark}
\def\cc#1{\setlength{\tabcolsep}{0pt}\begin{tabular}{c}#1\end{tabular}}
\newcommand{\figc}[2][]
   {\setlength{\tabcolsep}{0pt}\begin{tabular}{c}\includegraphics[#1]{#2}\end{tabular}}
\def\yaxis#1{\cc{\rotatebox{90}{{\small #1}}}}
\definecolor{indigo}{rgb}{.29,0.,0.51}
\begin{document}

\title{Travelling salesman-based compressive sampling.}

\author{\IEEEauthorblockN{Nicolas Chauffert, Philippe Ciuciu}
\IEEEauthorblockA{CEA, NeuroSpin center, \\
INRIA Saclay, PARIETAL Team \\
145, F-91191 Gif-sur-Yvette, France\\
Email: firstname.lastname@cea.fr}
\and
\IEEEauthorblockN{Jonas Kahn}
\IEEEauthorblockA{Laboratoire Painlev\'e, UMR 8524\\
    Universit\'e de Lille 1, CNRS\\
 Cit\'e Scientifique B\^at. M2\\
 59655 Villeneuve d'Ascq Cedex, France\\
Email: jonas.kahn@math.univ-lille1.fr}
\and
\IEEEauthorblockN{Pierre Weiss}
\IEEEauthorblockA{ITAV, USR 3505\\
PRIMO Team, \\
Universit\'e de Toulouse, CNRS\\
Toulouse, France\\
Email: pierre.weiss@itav-recherche.fr}}
\maketitle

\begin{abstract}
Compressed sensing theory indicates that selecting a few measurements independently at random is a near optimal
strategy to sense sparse or compressible signals. 
This is infeasible in practice for many acquisition devices that acquire samples along \textit{continuous} trajectories. 
Examples include magnetic resonance imaging (MRI), radio-interferometry, mobile-robot sampling, ...
In this paper, we propose to generate continuous sampling trajectories by drawing a small set of measurements
independently and joining them using a travelling salesman problem solver.
Our contribution lies in the theoretical derivation of the appropriate probability density of the initial drawings.
Preliminary simulation results show that this strategy is as efficient as independent drawings while being
implementable on real acquisition systems.
\end{abstract}

\IEEEpeerreviewmaketitle


\section{Introduction}

Compressed sensing theory provides guarantees on the reconstruction quality of sparse and compressible signals $x\in \R^n$ from a limited number of linear measurements $(\langle a_k, x\rangle)_{k\in K}$.
In most applications, the measurement or acquisition basis $A=(a_k)_{k\in \{1,\cdots,n\}}$ is fixed (e.g. Fourier or Wavelet basis). 
In order to reduce the acquisition time, one then needs to find a set $K$ of minimal cardinality 
that provides satisfactory reconstuction results.
It is proved in~\cite{candes2006near,rauhut2010compressive} that a good way to proceed consists of drawing the indices of $K$ independently at random according to a distribution $\tilde \pi$ that depends on the sensing basis $A$.
This result motivated a lot of authors to propose variable density random sampling strategies (see e.g.~\cite{lustig2007sparse,knoll2011adapted,chauffert2013}).
Fig.~\ref{fig:Distributions}(a) illustrates a typical sampling pattern used in the MRI context. 
Simulations confirm that such schemes are efficient in practice. 
Unfortunately, they can hardly be implemented on real hardware where the physics of the acquisition processes imposes \emph{at least} continuity of the sampling trajectory and sometimes a higher level of smoothness. Hence, actual CS-MRI solutions relie on adhoc
solutions such as random radial or randomly perturbed spiral trajectories to impose gradient continuity. 
Nevertheless these strategies strongly deviate from the theoretical setting and experiments confirm their practical suboptimality.

In this work, we propose an alternative to the independent sampling scheme. 
It consists of picking a few samples independently at random according to a distribution $\pi$ and joining them using a travelling salesman problem~(TSP) solver in order to design continuous trajectories. The main theoretical result of this paper states that $\pi$ should be proportional to $\tilde \pi^{d/(d-1)}$ where $d$ denotes the space dimension~(e.g. $d=2$ for 2D images, $d=3$ for 3D images) in order to emulate an independent drawing from distribution $\tilde \pi$. Similar ideas were previously proposed in the literature~\cite{wang2012smoothed}, but it seems that no author made this central observation. 

The rest of this paper is organized as follows. The notation and definitions are introduced in Section~\ref{notations}. Section~\ref{main} contains the main result of the paper along with its proof. Finally, Section~\ref{sec:results} presents simulation results in the MRI context. 
 
\section{Notation and definitions}
\label{notations}

We shall work on the hypercube $\Omega = [0,1]^d$ with $d \geq 2$. Let $m\in \N$. The set $\Omega$ will be partitionned in $m^d$ congruent hypercubes $(\omega_i)_{i\in I}$ of edge length $1/m$.
In what follows, $\left\{ x_i \right\} _{i\in \mathbb{N}^* }$ denotes a sequence of points in the hypercube $\Omega$, independently drawn from a density $\pi:\Omega \mapsto \R_+$. The set of the first $N$ points is denoted $X_N = \left\{ x_i \right\} _{i\leqslant N}$. 

For a set of points $F$, we consider the solution to the TSP, that is the shortest Hamiltonian path between those points.
We denote $T(F)$ its length. For any set $R\subseteq \Omega $ we define $T(F, R) = T(F \cap R)$.

We also introduce $C(X_N, \Omega )$ for the optimal curve itself, and $\gamma _N: [0,1] \to \Omega $ the function that parameterizes $C(X_N, \Omega )$ by moving along it at constant speed $T(X_N, \Omega )$. 

The Lebesgue measure on an interval $[0,1]$ is denoted $\lambda _{[0,1]}$. We define the \textit{distribution of the TSP solution as follows}.

\begin{definition}
    The distribution of the TSP solution is denoted $\tilde{\Pi}_N$ and defined, for any Borelian $B$ in $\Omega $ by:
    \begin{align*}
        \tilde{\Pi}_N(B) & = \lambda _{[0,1]} \left( \gamma _N^{-1} (B) \right).
    \end{align*}
\end{definition}

\begin{remarque}
    The distribution $\tilde{\Pi}_N$ is defined for fixed $X_N$. It makes no reference to the stochastic component of $X_N$.
\end{remarque}

In order to prove the main result, we need to introduce other tools. 
For a subset $\omega_i \subseteq \Omega $, we denote the length of $C(X_N, \Omega )\cap \omega_i$ as $T_{|\omega _i}(X_N, \Omega )=T(X_N, \Omega ) \tilde{\Pi}_N(\omega _i)$. Using this definition, it follows that:
\begin{equation}
\label{eq:defalternative}
\tilde{\Pi}_N(B) = \frac{T_{|B}(X_N, \Omega )}{T(X_N,\Omega)}, \ \forall B. 
\end{equation}

Let $T_B(F, R)$ be the length of the boundary TSP on the set $F \cap R$. 
The boundary TSP is defined as the shortest Hamiltonian tour on $F \cap R$ for the metric obtained from the Euclidean metric by the quotient of the boundary of $R$, that is $d(a,b) = 0$ if $a, b \in \partial R$. Informally, it matches the original TSP while being allowed to travel along the boundary for free. We refer to~\cite{yukich_gutin2002traveling} for a complete description of this concept.

\section{Main theorem}
\label{main}

Our main theoretical result reads as follows.

\begin{theoreme}
    \label{convergence_proba}
    Let $\tilde X_N$ denote a random vector in $\R^d$ with distribution $\tilde \Pi_N$ and $X$ denote a random vector with density $\tilde \pi = \frac{\pi^{(d-1)/d}}{\int_{\Omega} \pi^{(d-1)/d}(x) dx}$.  Then $\tilde X_N$ converges in probability to $X$:
    \begin{align}
        \label{convForm}
        \tilde X_N & \stackrel{(P)}{\rightarrow} X & \mbox{$\pi^{\otimes \mathbb{N}}$-a.s.}
    \end{align}
\end{theoreme}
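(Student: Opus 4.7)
The plan is to use the Beardwood--Halton--Hammersley (BHH) theorem as the main asymptotic tool: for $N$ points drawn i.i.d.\ from density $\pi$ on a bounded domain in $\R^d$ with $d\ge 2$,
\begin{equation*}
\frac{T(X_N)}{N^{(d-1)/d}} \to \beta_d \int_{\Omega} \pi(x)^{(d-1)/d}\,dx \quad \mbox{a.s.,}
\end{equation*}
with $\beta_d$ a dimensional constant; moreover, the same asymptotic (with the same $\beta_d$) holds for the boundary TSP $T_B$ on any nice sub-region, via the framework of~\cite{yukich_gutin2002traveling}. Combined with the definition $\tilde\Pi_N(\omega_i) = T_{|\omega_i}(X_N,\Omega)/T(X_N,\Omega)$, the theorem essentially reduces to identifying the local asymptotics of $T_{|\omega_i}(X_N,\Omega)$ for each subcube $\omega_i$ of a dyadic partition of $\Omega$.

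First, I would establish the geometric lower bound
\begin{equation*}
T_{|\omega_i}(X_N,\Omega) \ge T_B(X_N\cap\omega_i, \omega_i) - o(1),
\end{equation*}
since the global TSP restricted to $\omega_i$ is a collection of disjoint sub-paths covering $X_N\cap\omega_i$, which can be concatenated using $\partial\omega_i$ (free in the boundary-TSP metric) into a valid boundary-TSP tour of length essentially $T_{|\omega_i}$. Applying BHH to the right-hand side (conditionally on the binomial count $N_i$ of points in $\omega_i$, which satisfies $N_i/N\to\pi(\omega_i)$ a.s.\ by the law of large numbers, so the factors of $\pi(\omega_i)$ cancel out) then yields
\begin{equation*}
\liminf_N\frac{T_{|\omega_i}(X_N,\Omega)}{N^{(d-1)/d}} \ge \beta_d\int_{\omega_i}\pi^{(d-1)/d} \quad \mbox{a.s.}
\end{equation*}

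Second, I would promote this into an equality via a \emph{saturation} argument. By additivity along the partition, $\sum_i T_{|\omega_i}(X_N,\Omega) = T(X_N,\Omega)$; combined with BHH on $\Omega$, this says that the sum of the local $\liminf$ bounds already matches the limit of the total. Since there are only $m^d$ nonnegative contributions, each lower bound must be tight in the limit, giving $T_{|\omega_i}(X_N,\Omega)/N^{(d-1)/d} \to \beta_d\int_{\omega_i}\pi^{(d-1)/d}$ a.s. Dividing by $T(X_N,\Omega)/N^{(d-1)/d}$ and using~\eqref{eq:defalternative} produces $\tilde\Pi_N(\omega_i)\to \tilde\pi(\omega_i)$ a.s.\ for every cube of every dyadic partition, which via a standard portmanteau argument upgrades to the weak convergence $\tilde\Pi_N\Rightarrow\tilde\pi$, $\pi^{\otimes\N}$-a.s. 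A Skorokhod-type coupling then realizes the convergence in probability claimed in the statement.

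The main obstacle is precisely the tightness step: one only has a lower bound $T_{|\omega_i}\ge T_B$ per cube with no direct matching upper bound, so the conclusion relies entirely on the fact that the BHH constants for the TSP and the boundary TSP coincide, leaving no slack in the global sum. A secondary technical point is pinning down the correct version of BHH for the boundary TSP on $\omega_i$ with a random (binomial) sample size, which is standard in the Yukich framework via superadditivity but must be handled carefully to get the right constant.
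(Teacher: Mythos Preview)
Your proposal is correct and follows essentially the same route as the paper: the lower bound $T_{|\omega_i}\ge T_B(X_N,\omega_i)$ is exactly the paper's inequality~\eqref{boundlocal}, and your saturation argument (the sum of the local $\liminf$ bounds already exhausts the global BHH limit, forcing equality cube by cube) is precisely how the paper derives~\eqref{allequal} and then Proposition~\ref{cube_by_cube}. The only cosmetic differences are that the paper obtains the BHH asymptotic for $T_B$ on each $\omega_i$ via the approximation bound~\eqref{bound_approx} rather than invoking a boundary-TSP version of BHH directly, and it finishes with an explicit small-cube coupling instead of the portmanteau/Skorokhod phrasing---both equivalent to what you wrote.
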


The remainder of this section is dedicated to proving this result.
The following proposition is central to obtain the proof:
\begin{proposition}
    \label{cube_by_cube}
    Almost surely, for all $\omega_i$ in $\{\omega_i\}_{1\leq i \leq m^d}$:    
    \begin{align}
        \label{for_small}
        \lim_{N\to \infty} \tilde{\Pi}_N(\omega _i) & = \tilde{\pi}(\omega _i) \\
                                                    & = \frac{\int_{\omega _i} \pi^{(d-1)/d}(x) \mathrm{d}x}{\int_{\Omega} \pi^{(d-1)/d}(x) \mathrm{d}x} & \mbox{$\pi^{\otimes \mathbb{N}}$-a.s.}
    \end{align}
    
\end{proposition}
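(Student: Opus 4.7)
My plan is to obtain the asymptotics of numerator and denominator in $\tilde{\Pi}_N(\omega_i)=T_{|\omega_i}(X_N,\Omega)/T(X_N,\Omega)$ separately, by invoking the Beardwood--Halton--Hammersley (BHH) theorem in two forms, and then to use a matching lower bound to pin down each piece exactly. For the denominator, I would apply the classical BHH theorem to the i.i.d.\ sample $X_N \sim \pi^{\otimes N}$ on $\Omega$, which yields almost surely
\[
T(X_N,\Omega)/N^{(d-1)/d}\;\longrightarrow\; \beta_d \int_\Omega \pi^{(d-1)/d}(x)\,dx,
\]
for the standard dimension-dependent constant $\beta_d>0$.

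The numerator is handled via the geometric inequality $T_{|\omega_i}(X_N,\Omega)\geq T_B(X_N\cap\omega_i,\omega_i)$. Indeed, the intersection of the optimal path $C(X_N,\Omega)$ with $\omega_i$ is a disjoint union of sub-paths whose interior endpoints all lie on $\partial\omega_i$; after identifying $\partial\omega_i$ with a single point, these sub-paths form an admissible boundary-TSP tour on $X_N\cap\omega_i$, whose length cannot exceed the sum of the original pieces. I would then apply the boundary-BHH theorem~\cite{yukich_gutin2002traveling} inside each $\omega_i$: the strong law of large numbers gives $N_i:=|X_N\cap\omega_i| = N\pi(\omega_i)(1+o(1))$ a.s., and conditionally on $N_i$ the points of $X_N\cap\omega_i$ are i.i.d.\ with density $\pi|_{\omega_i}/\pi(\omega_i)$. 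A short substitution then yields, almost surely,
\[
T_B(X_N\cap\omega_i,\omega_i)/N^{(d-1)/d}\;\longrightarrow\; \beta_d \int_{\omega_i}\pi^{(d-1)/d}(x)\,dx,
\]
with the \emph{same} constant $\beta_d$ as for the unrestricted TSP.

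To close the argument, I sum these lower bounds over the $m^d$ cubes: the result matches the asymptotics of $T(X_N,\Omega)/N^{(d-1)/d}$ exactly. Since $T(X_N,\Omega)=\sum_i T_{|\omega_i}(X_N,\Omega)$ and each summand is bounded below by the corresponding $T_B$, agreement of the aggregate limits forces every individual inequality to be asymptotically tight. Dividing the numerator limit by the denominator limit then yields~\eqref{for_small}. The main technical hurdle I anticipate is extending the boundary-BHH theorem---usually stated for a deterministic number of i.i.d.\ points drawn from a fixed density---to the random, $N$-dependent point set $X_N\cap\omega_i$; this requires pairing the SLLN for $N_i$ with an almost-sure BHH statement along random subsequences, and controlling the behaviour on the (negligible) event $\{N_i=0\}$. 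A secondary but essential point is verifying that the BHH constant $\beta_d$ is identical for the unrestricted TSP on $\Omega$ and the boundary TSP on each $\omega_i$, since otherwise the tightness step that converts the inequality into equality would not close.
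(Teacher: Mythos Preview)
Your proposal is correct and follows the same core strategy as the paper: bound each $T_{|\omega_i}(X_N,\Omega)$ below by the boundary TSP $T_B(X_N,\omega_i)$, observe that the sum of these lower bounds already matches the BHH asymptotics of $T(X_N,\Omega)$, and conclude that every individual inequality is asymptotically tight. The only packaging difference is that the paper does not invoke a ``boundary-BHH theorem'' as a black box; instead it sandwiches $\sum_i T_{|\omega_i}$ between $\sum_i T_B(X_N,\omega_i)$ and $\sum_i T(X_N,\omega_i)+O(N^{(d-2)/(d-1)})$ via superadditivity and the grid-approximation lemma, and then uses $|T-T_B|=O(n^{(d-2)/(d-1)})$ on each cube to show the two ends of the sandwich coincide after normalisation---thereby proving in-line exactly the two points you flag as concerns (same constant $\beta_d$, and passage from $T_B$ to $T$ on random sub-samples). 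Your route is marginally more economical since the upper sandwich bound via the local TSPs $T(X_N,\omega_i)$ is redundant once the summed lower bound already hits the global limit; the paper's route, on the other hand, makes the equality of constants self-contained rather than an external citation.
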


The strategy consists in proving that $T_{|\omega _i}(X_N, \Omega )$ tends asymptotically to $T(X_N, \omega _i)$. The estimation of each term can then be obtained by applying the asymptotic result of Beardwood, Halton and Hammersley~\cite{beardwood1959shortest}:
\begin{theoreme}
    \label{BHH}
    If $R$ is a Lebesgue-measurable set in $\mathbb{R}^d$ such that the boundary $\partial R$ has zero measure, and $\{y_i\}_{i\in \mathbb{N}^* }$, with $Y_N = \left\{ y_i \right\} _{i\leqslant N}$ is a sequence of i.i.d. points from a density $p$ supported on $R$, then, almost surely,
    \begin{align}
        \label{BHHeq}
        \lim_{N \to \infty} \frac{T(Y_N, R) }{N^{(d-1)/d}} & = \beta (d) \int_{R}  p^{(d-1)/d}(x) \mathrm{d}x,
    \end{align}
    where $\beta (d)$ depends on the dimension $d$ only.
\end{theoreme}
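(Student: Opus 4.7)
The plan is to follow the classical Beardwood-Halton-Hammersley argument, proceeding in two stages: first establish the limit for the uniform density on the unit cube via a sub/superadditivity argument, and then transfer to an arbitrary density by approximating it by piecewise constants on a fine grid of sub-cubes.

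For the uniform case on $[0,1]^d$, the key observation is that the functional $T$ is almost subadditive under partitions. Splitting $[0,1]^d$ into $k^d$ congruent sub-cubes $\omega_j$ of edge $1/k$, concatenating an optimal tour on each sub-cube and linking them costs at most $\sum_j T(Y_N\cap\omega_j) + O(k^{d-1})$, which yields an upper bound. Conversely, the boundary TSP functional $T_B$ introduced in Section~\ref{notations} is genuinely superadditive -- quotienting by $\partial R$ makes any global tour, once restricted to a sub-cube, behave like a stand-alone tour inside it -- and this yields a matching lower bound. Combining the two bounds with the scaling $T(sF)=s\,T(F)$ and invoking Fekete's lemma (or, more cleanly, Steele's subadditive ergodic theorem applied to the random functional $R\mapsto T(Y_N\cap R)$) produces an a.s.\ constant $\beta(d)$ such that $T(Y_N,[0,1]^d)/N^{(d-1)/d} \to \beta(d)$ when the $y_i$ are i.i.d.\ uniform on $[0,1]^d$.

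For a general density $p$ on $R$, I would embed $R$ in a cube and partition it into $m^d$ sub-cubes $\omega_i$ of edge $1/m$. Let $N_i=|Y_N\cap\omega_i|$; by the strong law of large numbers, $N_i/N \to p_i |\omega_i|$ a.s., where $p_i$ denotes the mean of $p$ on $\omega_i$. Conditionally on $N_i$, the $N_i$ points inside $\omega_i$ are i.i.d.\ with density proportional to $p|_{\omega_i}$, which is close to uniform at continuity points of $p$ once $m$ is large. Applying the uniform case above (after an affine rescaling of $\omega_i$ onto $[0,1]^d$) then gives
\begin{equation*}
T(Y_N\cap\omega_i) \sim \beta(d)\,N^{(d-1)/d}(p_i |\omega_i|)^{(d-1)/d}\cdot m^{-1},
\end{equation*}
so that summation over $i$ produces a Riemann sum converging, as $m\to\infty$, to $\beta(d)\,N^{(d-1)/d}\int_R p^{(d-1)/d}(x)\,\mathrm{d}x$; the hypothesis that $\partial R$ has zero Lebesgue measure ensures that sub-cubes straddling $\partial R$ contribute negligibly.

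The principal obstacle, and precisely the motivation for introducing $T_B$ in the notation section, is reconciling the per-cube optimal tours with the genuine global TSP on $Y_N$. The concatenation scheme gives $T(Y_N,R)\le \sum_i T(Y_N\cap\omega_i) + O(m^{d-1})$, while superadditivity of $T_B$ together with the pointwise bound $T_B\le T$ yields $\sum_i T_B(Y_N\cap\omega_i) \le T(Y_N,R) + O(m^{d-1})$. Both error terms are $o(N^{(d-1)/d})$ once $N\to\infty$ with $m$ fixed, so a diagonal argument (first $N\to\infty$, then $m\to\infty$) closes the squeeze between the two Riemann sums. The delicate technical point is to make the per-cube asymptotics uniform in the random $N_i$ and in the local densities $p|_{\omega_i}/p_i$, so that an almost-sure conclusion -- rather than mere convergence in probability -- can actually be extracted.
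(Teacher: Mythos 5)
The paper does not prove Theorem~\ref{BHH} at all: it is the classical Beardwood--Halton--Hammersley theorem, imported as a black box from \cite{beardwood1959shortest} (see also \cite{yukich1998probability}) and then used inside the proof of Proposition~\ref{cube_by_cube}. So there is no in-paper argument to compare against; what your sketch can be compared to is the classical proof, and it reproduces that proof's architecture faithfully: almost-subadditivity of $T$ under partition into $k^d$ sub-cubes for the upper bound, superadditivity of the boundary functional $T_B$ together with $T_B\leq T$ and $|T-T_B|=o(N^{(d-1)/d})$ for the matching lower bound, a Fekete/Steele argument producing the constant $\beta(d)$ in the uniform case, and a Riemann-sum transfer to general densities with the $N\to\infty$-then-$m\to\infty$ diagonal. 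Your scaling bookkeeping is correct: $T(Y_N\cap\omega_i)\sim\beta(d)\,m^{-1}(Np_i m^{-d})^{(d-1)/d}$ sums over $i$ to $\beta(d)N^{(d-1)/d}\sum_i p_i^{(d-1)/d}|\omega_i|$, which is the right Riemann sum. The one genuinely thin spot is the step you flag yourself at the end: Fekete's lemma applies to a deterministic subadditive sequence (or to expectations), so the almost-sure conclusion in the uniform case requires either Steele's subadditive ergodic theorem used in earnest or a concentration inequality (Rhee--Talagrand, or Azuma on the point-exposure martingale) followed by a subsequence argument; and in the non-uniform step you must handle the randomness of the counts $N_i$, for instance by conditioning on the cube assignments so that the $N_i$ points of $\omega_i$ are i.i.d.\ from the normalized restriction of $p$ to $\omega_i$ with $N_i\to\infty$ a.s., so that the uniform-case limit applies along the random subsequence. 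These steps are standard but not optional, and your proposal names rather than executes them. As an outline of the known proof, the proposal is sound; as a self-contained proof it is incomplete precisely where the original theorem is hardest.
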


We shall use a set of classical results on TSP and boundary TSP, that may be found in the survey books~\cite{yukich_gutin2002traveling}
and~\cite{yukich1998probability}.
\begin{lemme}

Let $F$ denote a set of $n$ points in $\Omega $.
\begin{enumerate}
 \item The boundary TSP is superadditive, that is, if $R_1$ and $R_2$ have disjoint interiors
\begin{align}
    \label{superadditive}
    T_B(F, R_1 \cup R_2) & \geqslant T_B(F, R_1) + T_B(F, R_2).
\end{align}

 \item The boundary TSP is a lower bound on the TSP, both globally and on subsets. If $R_2 \subset R_1$:
\begin{align}
    \label{boundT}
    T(F, R) & \geqslant T_B(F, R)
    \\
    \label{boundlocal}    
    T_{|R_2}(F, R_1) & \geqslant T_B(F, R_2)
\end{align}

  \item The boundary TSP approximates well the TSP~\cite[Lemma $3.7$]{yukich1998probability}):
\begin{align}
    \label{bound_approx}
    |T(F,\Omega) - T_B(F,\Omega)| =  O(n^{(d-2)/(d-1)} ).
\end{align}

  \item The TSP in $\Omega$ is well-approximated by the sum of TSPs in a grid of $m^d$ congruent hypercubes~\cite[Eq.~(33)]{yukich_gutin2002traveling}.
\begin{align}
    \label{approx}
    \lvert T(F, \Omega ) - \sum_{i=1}^{m^d} T(F, \omega _i) \rvert = O(n^{(d-2)/(d-1)}).
\end{align}
\end{enumerate}
\end{lemme}

We now have all the ingredients to prove the main results.
\begin{proof}[Proof of Proposition \ref{cube_by_cube}]

\begin{align*}
    \sum_{i\in I} T_B(X_N,\omega_i) & \stackrel{\eqref{superadditive}}{\leqslant} T_B(X_N,\Omega)  \\
  & \stackrel{\eqref{boundT}}{\leqslant} T(X_N,\Omega) \\
  & = \sum_{i\in I} T|_{\omega_i}(X_N,\Omega) \\
  & \stackrel{\eqref{approx}}{\leqslant} \sum_{i\in I} T(X_N,\omega_i) + O(N^{(d-1)/(d-2)})
\end{align*}

Let $N_i$ be the number of points of $X_N$ in $\omega _i$. 

Since $N_i \leqslant N$, we may use the bound \eqref{bound_approx} to get: 
\begin{equation}
\label{eq:9}
 \lim_{N\rightarrow \infty}\frac{T(X_N,\omega_i)}{N^{(d-1)/d}} =  \lim_{N\rightarrow \infty}\frac{T_B(X_N,\omega_i)}{N^{(d-1)/d}}.
\end{equation}
Using the fact that there are only finitely many $\omega _i$, the following equalities hold almost surely:
\begin{align*}
\lim_{N\rightarrow \infty}  \frac{\sum_{i\in I}T_B(X_N,\omega_i)}{N^{(d-1)/d}} 
     & = \lim_{N\rightarrow \infty} \frac{\sum_{i\in I}T(X_N,\omega_i)}{N^{(d-1)/d}} \\
     & \stackrel{\eqref{approx}}{=} \lim_{N\rightarrow \infty}  \frac{\sum_{i \in I}T_{|\omega_i}(X_N,\Omega )}{N^{(d-1)/d}}.
\end{align*}

Since the boundary TSP is a lower bound~(cf. Eqs.~\eqref{boundlocal}-\eqref{boundT}) to both local and global TSPs, the above equality ensures that:
\begin{align}
    \label{allequal}
    \lim_{N\rightarrow \infty}  \frac{T_B(X_N,\omega_i)}{N^{(d-1)/d}} & = \lim_{N\rightarrow \infty} \frac {T(X_N,\omega_i )}{N^{(d-1)/d}}\\
     & = \lim_{N\rightarrow \infty} \frac {T_{|\omega_i}(X_N,\Omega )}{N^{(d-1)/d}}   & \mbox{$\pi^{\otimes \mathbb{N} }$-a.s, $\forall i$}.\nonumber
\end{align}

Finally, by the law of large numbers, almost surely $N_i / N \to \pi(\omega _i)=\int_{\omega_i} \pi(x)dx$. 
The law of any point $x_j$ conditioned on being in $\omega _i$ has density $\pi / \pi(\omega_i)$. By applying Theorem \ref{BHH} to the hypercubes $\omega _i$ and $\Omega$ we thus get:
\begin{align*}
    \lim_{N\rightarrow +\infty} \frac{T(X_N,\omega_i)}{N^{(d-1)/d}} & = \beta(d) \int_{\omega_i} \pi(x)^{(d-1)/d}dx & \mbox{$\pi^{\otimes \mathbb{N} }$-a.s, $\forall i$}.
\end{align*}
and 
\begin{align*}
    \lim_{N\rightarrow +\infty} \frac{T(X_N,\Omega)}{N^{(d-1)/d}} & = \beta(d) \int_{\Omega} \pi(x)^{(d-1)/d}dx & \mbox{$\pi^{\otimes \mathbb{N} }$-a.s, $\forall i$}.
\end{align*}
Combining this result with Eqs.~\eqref{allequal} and \eqref{eq:defalternative} yields Proposition~\ref{cube_by_cube}.
\end{proof}

\begin{proof}[Proof of Theorem \ref{convergence_proba}]
    Let $\varepsilon > 0$ and $m$ be an integer such that $\sqrt{d} m^{-d} < \varepsilon$. Then any two points in $\omega _i$ are at distance less than $\varepsilon $.

    Using Theorem \ref{cube_by_cube} and the fact that there is a finite number of $\omega _i$, almost surely, we get:
        $\displaystyle \lim_{N\rightarrow +\infty} \sum_{i\in I} \left| \tilde{\Pi}_N(\omega_i) - \tilde \pi(\omega _i) \right| = 0$. 
 Hence, for any $N$ large enough, there is a coupling of $\tilde{\Pi}_N$ and $\tilde \pi$ such that both corresponding random variables are in the same $\omega _i$ with probability $1- \varepsilon $. Since its diameter is less than $\varepsilon $, this ends the proof.
\end{proof}

\section{Simulation results in MRI}
\label{sec:results}

The proposed sampling algorithm was assessed in a 2D MRI acquisition setup where images are sampled in the 2D Fourier
domain and compressible in the wavelet domain. Hence, $A=\mathcal{F}^*\Psi$ where $\mathcal{F}^*$ and $\Psi$ denote the discrete Fourier and inverse discrete wavelet transform, respectively.
Following~\cite{chauffert2013}, it can be shown that a near optimal sampling strategy consists of probing  $m$ independent samples of the 2D Fourier plane $(k_x,k_y)$ drawn independently from a target density $\tilde \pi$. 
A typical realization is illustrated in Fig.~\ref{fig:Distributions}(a) which in practice cannot be implemented since MRI requires probing samples along continuous curves.
To circumvent such difficulties, a TSP solver was applied to such realization in order to join all samples through a countinuous trajectory, as illustrated in Fig.~\ref{fig:Distributions}(c). Finally, Fig.~\ref{fig:Distributions}(e) shows a curve generated by a TSP solver after drawing the same amount of Fourier samples from the density $\tilde \pi^2$ as underlied by Theorem~\ref{convergence_proba}. In all sampling schemes the number of probed Fourier coefficients was equal to one fifth of the total number~(acceleration factor $r=5$).

Figs.~\ref{fig:Distributions}(b,d,f) show the corresponding reconstruction results. It is readily seen that an independent random drawing from $\tilde \pi^2$ followed by a TSP-based solver yields promising results. Moreover, a dramatic improvement of $10$dB was obtained compared to the initial drawing from $\tilde \pi$.

\begin{figure}
\begin{center}
\begin{tabular}{cc}
\yaxis{$k_y$} \figc[width=.22\textwidth]{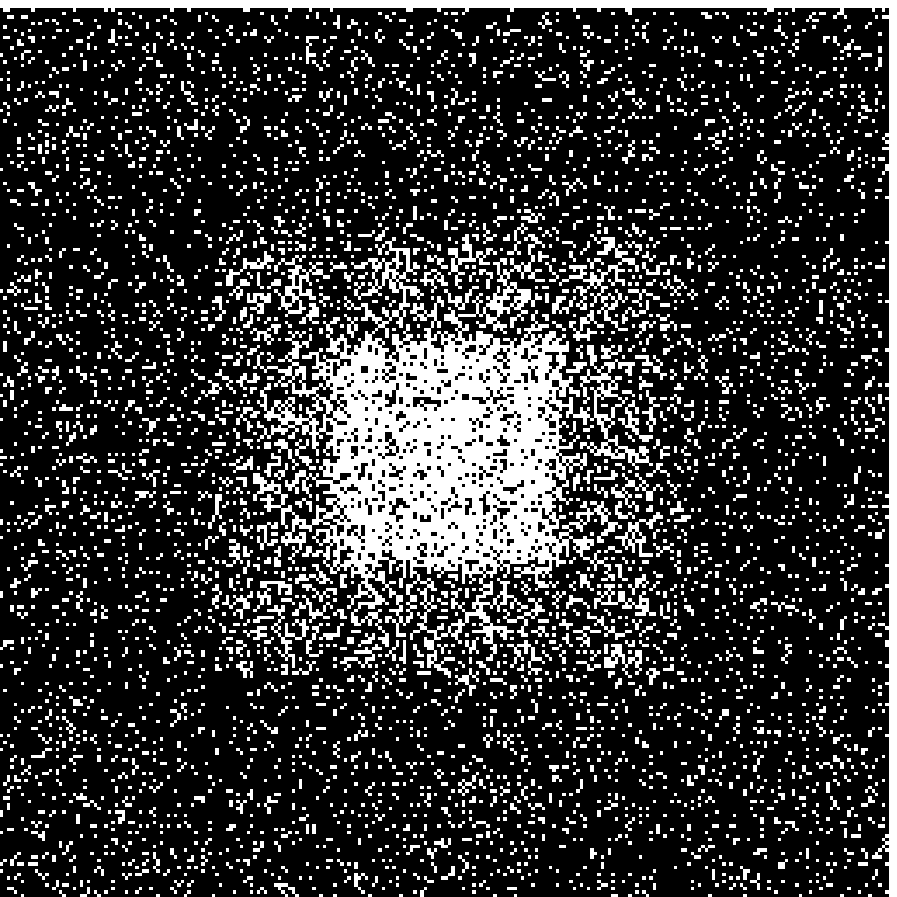}& \figc[width=.22\textwidth]{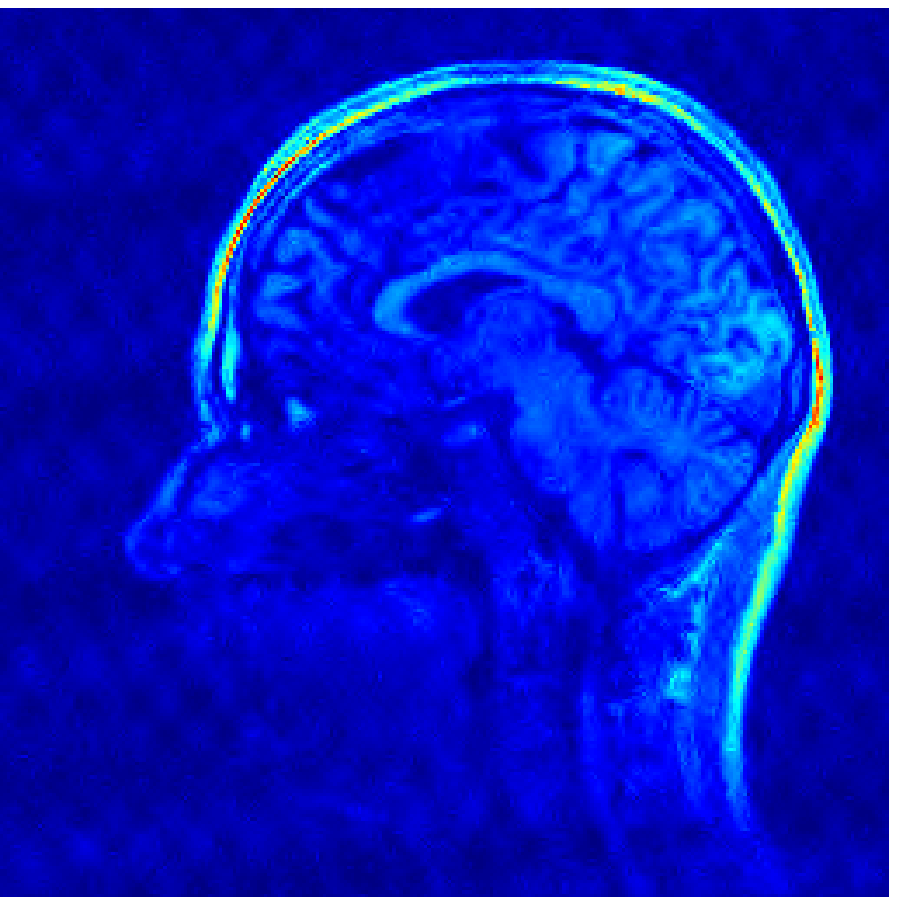}\\[-4.2cm]
{\small (a)} & {\small (b) SNR=33.0dB}\\[4.1cm]
\yaxis{$k_y$} \figc[width=.22\textwidth]{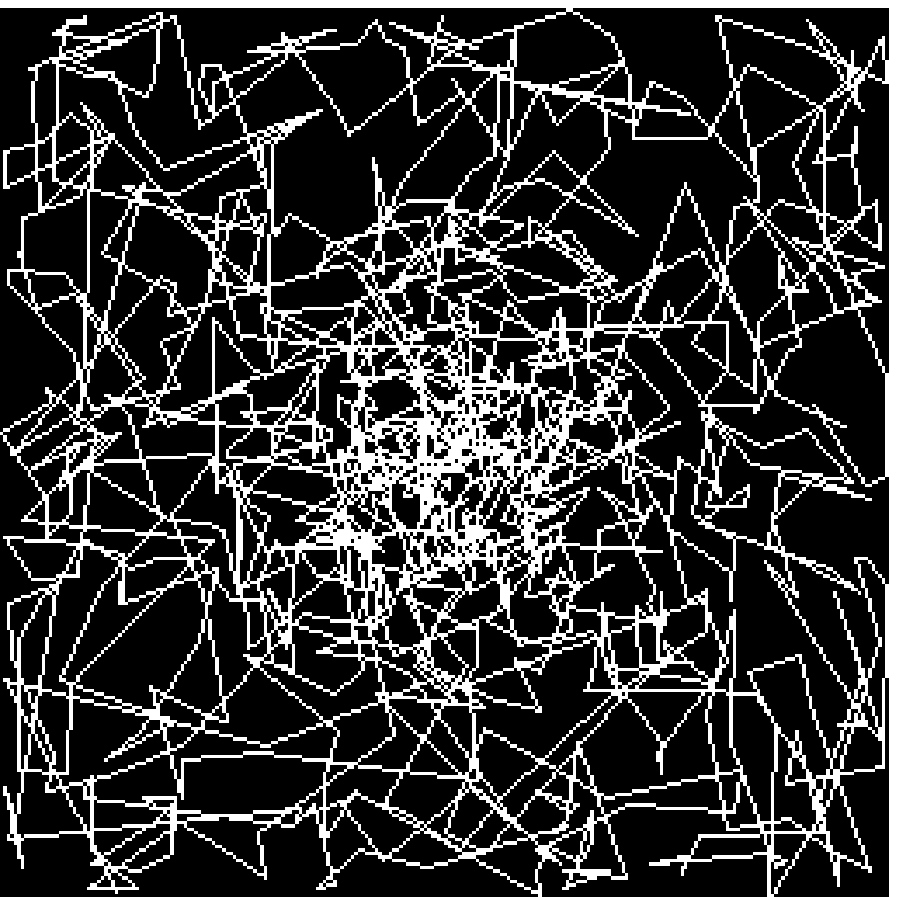}& \figc[width=.22\textwidth]{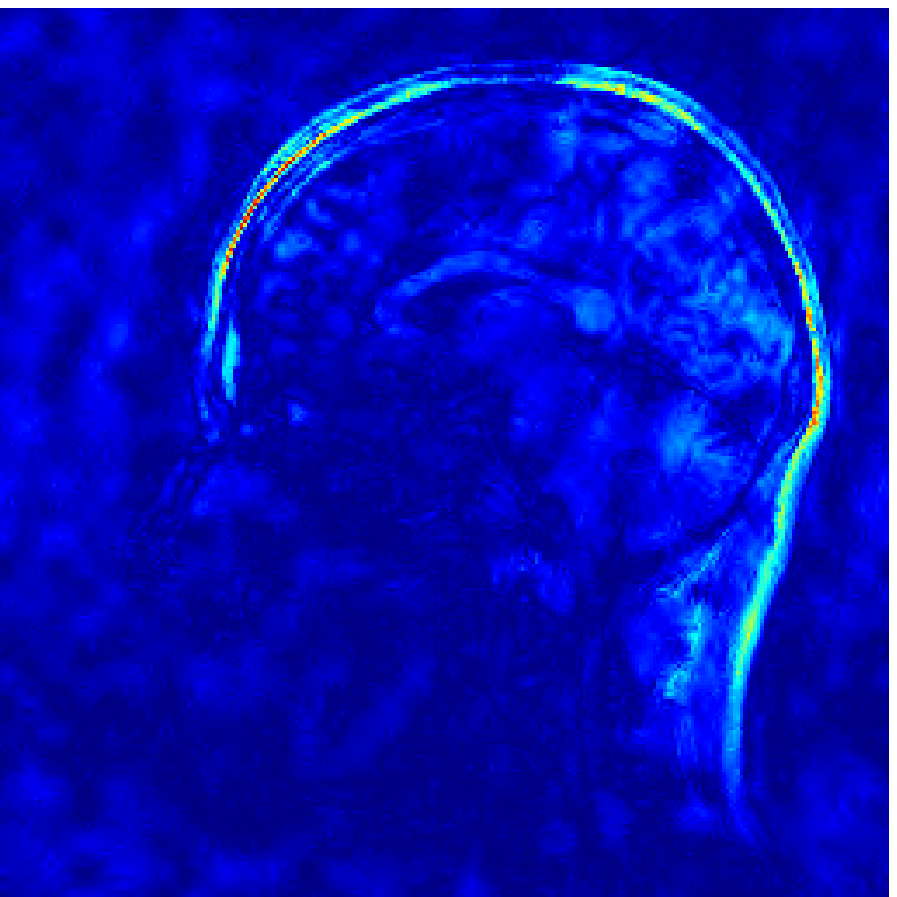}\\[-4.2cm]
{\small (c)} & {\small (d) SNR=24.1dB}\\[4.1cm]
\yaxis{$k_y$} \figc[width=.22\textwidth]{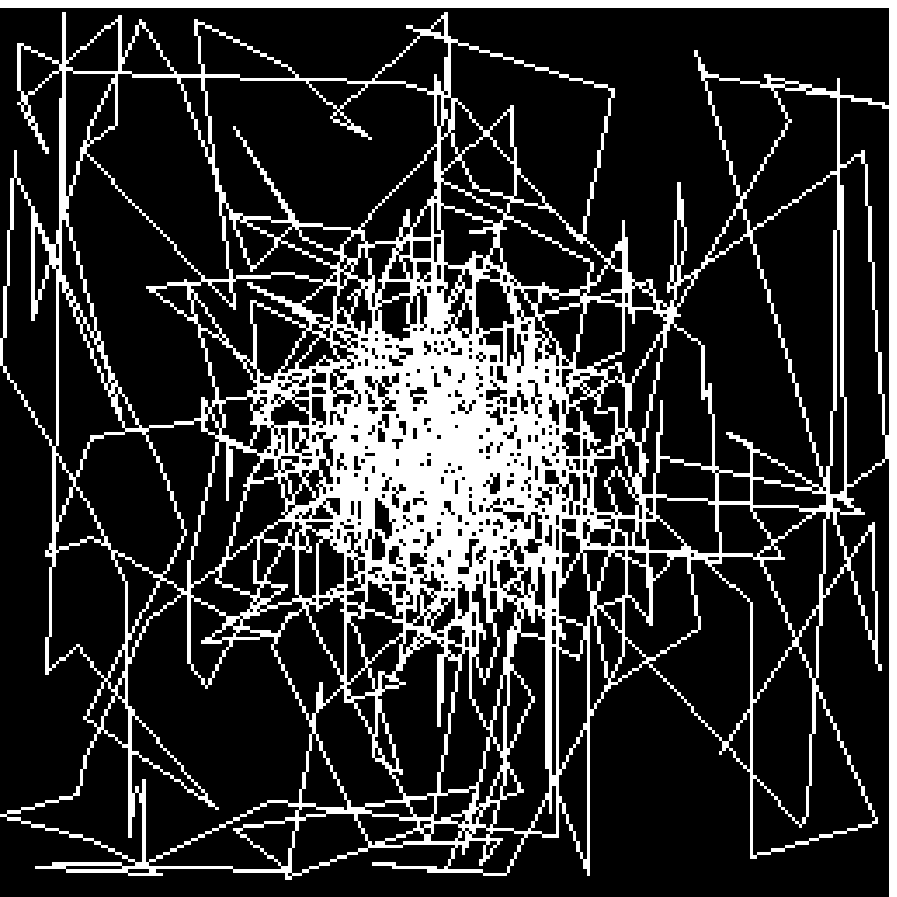} & \figc[width=.22\textwidth]{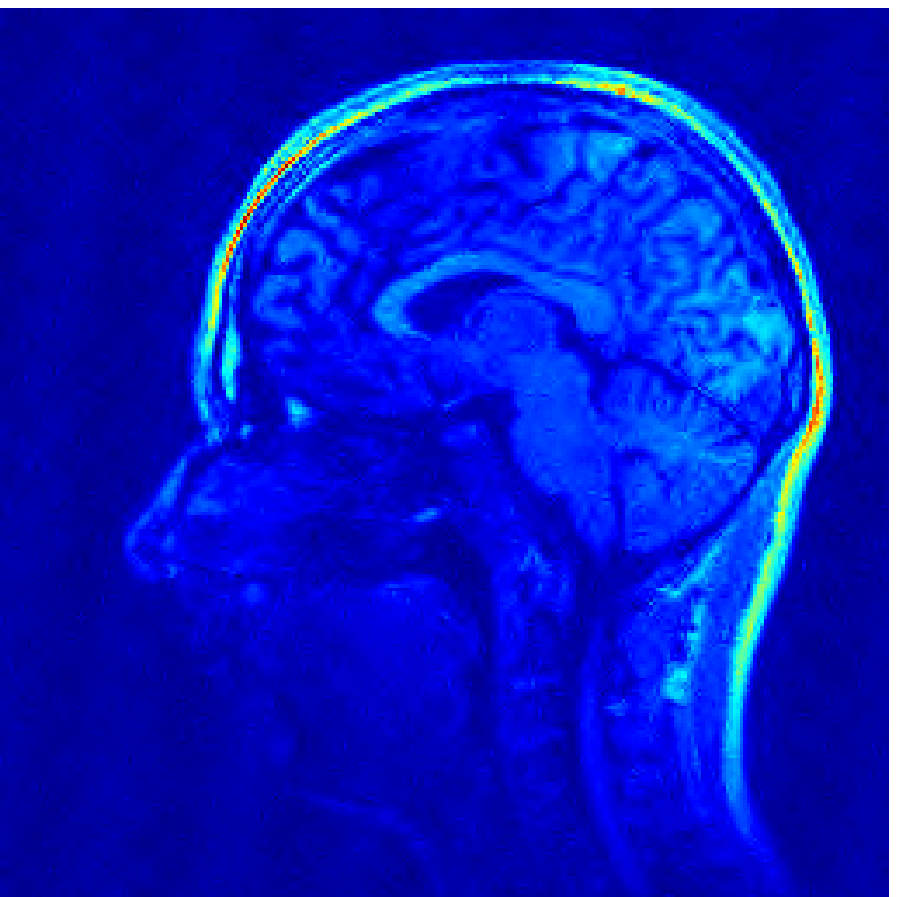}\\ 
 $k_x$ & \\[-4.6cm]
{\small (e)} & {\small (f) SNR=34.1dB}\\[4.3cm]
\end{tabular}\vspace*{-.5cm}
\end{center}
\caption{{\bf Left:} different sampling patterns (with an acceleration factor $r=5$). {\bf Right:} reconstruction results. From top to bottom: independent drawing from distribution $\tilde \pi$~(a), the same followed by a TSP solver~(c) and finally independent drawing
from distribution $\tilde \pi^2$ followed by a TSP solver.\label{fig:Distributions}}
\end{figure}

\section{Conclusion}

Designing sampling patterns lying on continuous curves is central for practical applications such as MRI.
In this paper, we proposed and justified an original two-step approach based on a TSP solver to produce such continuous trajectories. 
It allows to emulate any variable density sampling strategy and could thus be used in a large variety of applications.
In the above mentioned MRI example, this method improves the signal-to-noise ratio by $10$dB compared to more naive approaches and provides results similar to those obtained using unconstrained sampling schemes.
From a theoretical point of view, we plan to assess the convergence rate of the empirical law of the  travelling salesman trajectory to the target distribution $\pi^{(d-1)/d}$. From a practical point of view, we plan to develop algorithms that integrate stronger constraints into account such as the maximal curvature of the sampling trajectory, which plays a key role in many applications. 

\section*{Acknowledgment}

The authors would like to thank the mission pour l'interdisciplinarit\'e from CNRS and the ANR SPHIM3D for partial support of Jonas Kahn's visit to Toulouse and the CIMI Excellence Laboratory for inviting Philippe Ciuciu on a excellence researcher position during winter 2013.



\footnotesize{
\bibliographystyle{IEEEtran}
\bibliography{bibSampTA_VC}

\begin{thebibliography}{1}
\providecommand{\url}[1]{#1}
\csname url@samestyle\endcsname
\providecommand{\newblock}{\relax}
\providecommand{\bibinfo}[2]{#2}
\providecommand{\BIBentrySTDinterwordspacing}{\spaceskip=0pt\relax}
\providecommand{\BIBentryALTinterwordstretchfactor}{4}
\providecommand{\BIBentryALTinterwordspacing}{\spaceskip=\fontdimen2\font plus
\BIBentryALTinterwordstretchfactor\fontdimen3\font minus
  \fontdimen4\font\relax}
\providecommand{\BIBforeignlanguage}[2]{{%
\expandafter\ifx\csname l@#1\endcsname\relax
\typeout{** WARNING: IEEEtran.bst: No hyphenation pattern has been}%
\typeout{** loaded for the language `#1'. Using the pattern for}%
\typeout{** the default language instead.}%
\else
\language=\csname l@#1\endcsname
\fi
#2}}
\providecommand{\BIBdecl}{\relax}
\BIBdecl

\bibitem{candes2006near}
E.~Candes and T.~Tao, ``Near-optimal signal recovery from random projections:
  Universal encoding strategies?'' \emph{Information Theory, IEEE Transactions
  on}, vol.~52, no.~12, pp. 5406--5425, 2006.

\bibitem{rauhut2010compressive}
H.~Rauhut, ``Compressive sensing and structured random matrices,''
  \emph{Theoretical foundations and numerical methods for sparse recovery},
  vol.~9, pp. 1--92, 2010.

\bibitem{lustig2007sparse}
M.~Lustig, D.~Donoho, and J.~Pauly, ``Sparse {MRI}: The application of
  compressed sensing for rapid mr imaging,'' \emph{Magnetic Resonance in
  Medicine}, vol.~58, no.~6, pp. 1182--1195, 2007.

\bibitem{knoll2011adapted}
F.~Knoll, C.~Clason, C.~Diwoky, and R.~Stollberger, ``Adapted random sampling
  patterns for accelerated {MRI},'' \emph{Magnetic Resonance Materials in
  Physics, Biology and Medicine}, vol.~24, no.~1, pp. 43--50, 2011.

\bibitem{chauffert2013}
N.~Chauffert, P.~Ciuciu, and P.~Weiss, ``Variable density compressed sensing in
  {MRI}. {T}heoretical {VS} heuristic sampling strategies.'' in
  \emph{proceedings of IEEE ISBI}, 2013.

\bibitem{wang2012smoothed}
H.~Wang, X.~Wang, Y.~Zhou, Y.~Chang, and Y.~Wang, ``Smoothed random-like
  trajectory for compressed sensing {MRI},'' in \emph{Engineering in Medicine
  and Biology Society (EMBC), 2012 Annual International Conference of the
  IEEE}, 2012, pp. 404--407.

\bibitem{yukich_gutin2002traveling}
A.~M. Frieze and J.~E. Yukich, ``Probabilistic analysis of the tsp,'' in
  \emph{The traveling salesman problem and its variations}, ser. Combinatorial
  optimization, G.~Gutin and A.~P. Punnen, Eds.\hskip 1em plus 0.5em minus
  0.4em\relax Springer, 2002, vol.~12, pp. 257--308.

\bibitem{beardwood1959shortest}
J.~Beardwood, J.~Halton, and J.~Hammersley, ``The shortest path through many
  points,'' in \emph{Mathematical Proceedings of the Cambridge Philosophical
  Society}, vol.~55, no.~04.\hskip 1em plus 0.5em minus 0.4em\relax Cambridge
  Univ Press, 1959, pp. 299--327.

\bibitem{yukich1998probability}
J.~Yukich, \emph{Probability theory of classical Euclidean optimization
  problems}.\hskip 1em plus 0.5em minus 0.4em\relax Springer, 1998.

\end{thebibliography}
}
%
%


\end{document}